\crefname{section}{Section}{Sections}
\crefname{subsection}{\S}{\S\S}
\crefname{subsubsection}{\S}{\S\S}
\theoremstyle{plain}
\newtheorem{lemma}{Lemma}[section]
\newtheorem{corollary}[lemma]{Corollary}
\newtheorem{theorem}[lemma]{Theorem}
\theoremstyle{plain}
\newtheorem{theoremN}{Theorem}
\theoremstyle{plain}
\newtheorem{definition}[lemma]{Definition}
\newtheorem{examples}[lemma]{Examples}
\newtheorem{remark}[lemma]{Remark}
\newtheorem{remarks}[lemma]{Remarks}
\crefname{definition}{definition}{definitions}
\crefname{example}{example}{examples}
\crefname{examples}{example}{examples}
\crefname{remark}{remark}{remarks}
\crefname{remarks}{remark}{remarks}
\crefname{convention}{convention}{conventions}
\crefname{notation}{notation}{notations}
\crefname{table}{table}{tables}
\crefname{lemma}{lemma}{lemmas}
\crefname{proposition}{proposition}{propositions}
\crefname{corollary}{corollary}{corollaries}
\crefname{theorem}{theorem}{theorems}
\crefname{enumi}{}{}
\crefname{assumption}{assumption}{Assumptions}
\crefname{construction}{construction}{Constructions}
\crefname{equation}{}{}
\numberwithin{equation}{section}
\theoremstyle{nonumberplain}
\newtheorem{proof}{Proof}
\newcommand\pf[1]{\newtheorem{#1}{Proof of \Cref{#1}}}
\newcommand\bC{{\mathbb C}}
\newcommand\bG{{\mathbb G}}
\newcommand\bZ{{\mathbb Z}}
\newcommand\cA{{\mathcal A}}
\newcommand\cK{{\mathcal K}}
\newcommand\cL{{\mathcal L}}
\newcommand\cP{{\mathcal P}}
\DeclareMathOperator{\im}{\mathrm{im}}
\DeclareMathOperator{\diag}{diag}
\DeclareMathOperator{\diam}{diam}
\DeclareMathOperator{\Irr}{Irr}
\newcommand{\cat}[1]{\textsc{#1}}
\newcommand{\qedhere}{\mbox{}\hfill\ensuremath{\blacksquare}}
\title{Normal approximations of commuting square-summable matrix families}
\author{Alexandru Chirvasitu}
\begin{document}

\date{}

\newcommand{\Addresses}{{
  \bigskip
  \footnotesize

  \textsc{Department of Mathematics, University at Buffalo}
  \par\nopagebreak
  \textsc{Buffalo, NY 14260-2900, USA}  
  \par\nopagebreak
  \textit{E-mail address}: \texttt{achirvas@buffalo.edu}


}}

\maketitle

\begin{abstract}
  For any square-summable commuting family $(A_i)_{i\in I}$ of complex $n\times n$ matrices there is a normal commuting family $(B_i)_i$ no farther from it, in squared normalized $\ell^2$ distance, than the diameter of the numerical range of $\sum_i A_i^* A_i$. Specializing in one direction (limiting case of the inequality for finite $I$) this recovers a result of M. Fraas: if $\sum_{i=1}^{\ell} A_i^* A_i$ is scalar for commuting $A_i\in M_n(\mathbb{C})$ then the $A_i$ are normal; specializing in another (singleton $I$) retrieves the well-known fact that close-to-isometric matrices are close to isometries. 
\end{abstract}

\noindent {\em Key words: spectrum; generalized eigenspace; numerical radius; normal operator; compact operator; quasi-nilpotent; spectral radius; Hilbert-Schmidt norm; Hyers-Ulam stability; upper-triangular; superdiagonal; Cholesky factorization}

\vspace{.5cm}

\noindent{MSC 2020: 15A24; 15A27; 47B15; 47A12; 15A18; 15A42; 47B07}


\section*{Introduction}

The motivating result for this note is \cite[Theorem 1]{fraas2023commuting_xv1}: commuting $n\times n$ matrices $A_i$, $1\le i\le \ell$ with $\sum_i |A_i|^2=1$ are automatically normal. The ingenious proof in \cite{fraas2023commuting_xv1} relies on the decomposition theory (e.g. \cite[\S\S 4 and 5]{zbMATH06417267}) of {\it completely positive} \cite[Definition II.6.9.1]{blk} maps such as
\begin{equation*}
  M_n(\bC)=:M_n \ni
  X \xmapsto{\quad}\sum_{i=1}^{\ell}A_i XA_i^*
  \in M_n.
\end{equation*}
It seemed sensible, then, to seek for a more directly linear-algebraic proof and perhaps generalize the result in various ways in the process. For a positive integer $n$ write
\begin{equation*}
  \left\vvvert A = (a_{ij})_{i,j}\right\vvvert_2
  :=
  \frac 1{\sqrt n}\left(\sum_{i,j}|a_{ij}|^2\right)^{1/2}
\end{equation*}
for the normalized Hilbert-Schmidt norm on $M_n$ and similarly for tuples $(A_i)_{i\in I}$ of matrices:
\begin{equation*}
  \left\vvvert(A_i)_i\right\vvvert_2
  :=
  \left(\sum_{i\in I}\left\vvvert A_i\right\vvvert_2^2\right)^{1/2}.
\end{equation*}
With that in place, \Cref{th:onmatseq-rob} below is one possible generalization of \cite[Theorem 1]{fraas2023commuting_xv1}. 

\begin{theoremN}\label{thn:lesw}
  If $A_i\in M_n$, $i\in I$ commute then there are normal commuting $B_i\in M_n$ with
  \begin{equation}\label{eq:thn:lesw}
    \left\vvvert (A_i-B_i)_{i\in I}\right\vvvert_2^2
    \le
    \text{diameter of the numerical range of }
    \sum_i A_i^* A_i
  \end{equation}
  (said diameter counting as infinite if $\sum_i A_i^* A_i$ fails to converge).  \qedhere
\end{theoremN}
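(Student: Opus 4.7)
The plan is to simultaneously upper-triangularize the family by Schur's theorem and take the diagonal parts as the $B_i$. Fix an orthonormal basis $e_1,\dots,e_n$ in which every $A_i$ is upper triangular; such a basis exists even for infinite $I$, since the subalgebra of $M_n$ generated by the $A_i$ is a finite-dimensional commutative algebra, hence admits a common eigenvector, after which one iterates on the orthogonal complement. In this basis, decompose $A_i=D_i+N_i$ with $D_i$ diagonal and $N_i$ strictly upper triangular, and set $B_i:=D_i$. The $B_i$ are pairwise commuting and each is normal, being diagonal in a common orthonormal basis. Since $A_i-B_i=N_i$, writing $\|\cdot\|_F$ for the unnormalized Frobenius norm and $S:=\sum_i A_i^*A_i$, the inequality \cref{eq:thn:lesw} reduces to $\sum_i\|N_i\|_F^2 \le n\cdot\diam W(S)$.

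Upper-triangularity gives $\|A_i\|_F^2=\|D_i\|_F^2+\|N_i\|_F^2$, so summing in $i$ produces
\[
\sum_i\|N_i\|_F^2 \;=\; \mathrm{tr}(S)-\sum_{k=1}^n\mu^{(k)} \;=\; \sum_{k=1}^n\bigl(S_{kk}-\mu^{(k)}\bigr),
\qquad \mu^{(k)}:=\sum_i\bigl|\lambda_i^{(k)}\bigr|^2,\ \lambda_i^{(k)}:=(D_i)_{kk}.
\]
It therefore suffices to show $S_{kk}-\mu^{(k)}\le\diam W(S)$ for each $k$. Since $S$ is Hermitian positive semidefinite, $W(S)=[\lambda_{\min}(S),\lambda_{\max}(S)]$, and the upper bound $S_{kk}\le\lambda_{\max}(S)$ is immediate from $S_{kk}=\langle Se_k,e_k\rangle\in W(S)$, leaving the matching lower bound $\mu^{(k)}\ge\lambda_{\min}(S)$ to be verified.

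This last point is the crux. The tuple $(\lambda_i^{(k)})_i$, being the $k$-th diagonal of a joint Schur form, is a joint eigenvalue of the commuting family; on its joint generalized eigenspace the commuting nilpotents $A_i-\lambda_i^{(k)}I$ share a common kernel vector, so one obtains a unit $x_k\in\bC^n$ with $A_ix_k=\lambda_i^{(k)}x_k$ for every $i$. Then
\[
\langle Sx_k,x_k\rangle \;=\; \sum_i\|A_ix_k\|^2 \;=\; \sum_i\bigl|\lambda_i^{(k)}\bigr|^2 \;=\; \mu^{(k)} \;\in\; W(S),
\]
so indeed $\mu^{(k)}\ge\lambda_{\min}(S)$. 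Summing the resulting inequalities $S_{kk}-\mu^{(k)}\le\diam W(S)$ over $k$ and dividing by $n$ finishes the argument. If $S$ fails to converge the right-hand side of \cref{eq:thn:lesw} is $+\infty$ and nothing is to prove. The obstacle is conceptual rather than technical: spotting that $\mu^{(k)}\in W(S)$ because the Schur diagonals arise from genuine joint eigenvectors (rather than merely generalized ones) is what makes the constant come out exactly right.
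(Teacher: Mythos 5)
Your proposal is correct and follows essentially the same route as the paper: simultaneous unitary upper-triangularization of the commuting family, taking the diagonals as the $B_i$, and bounding the defect by exhibiting, for each diagonal tuple $(\lambda_i^{(k)})_i$, a genuine joint eigenvector $x_k$ so that both $S_{kk}$ and $\mu^{(k)}$ lie in $W(S)$ and hence differ by at most $\diam W(S)$. The one step you assert rather than prove --- that the joint \emph{generalized} eigenspace attached to each diagonal tuple is nonzero --- is exactly the claim the paper takes care to establish (via a Jordan--H\"older filtration argument), and your passage from generalized to plain joint eigenvectors via common kernels of commuting nilpotents likewise mirrors the paper's.
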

Recall that the {\it numerical range} \cite[Chapter 22]{hal_hspb_2e_1982} of an operator $A$ on a Hilbert space is 
\begin{equation}\label{eq:numrng}
  \bC\supset W(A):=\left\{\braket{\xi\mid A\xi}\ |\ \|\xi\|=1\right\},
\end{equation}
so that the right-hand side of \Cref{eq:thn:lesw} is a measure of how far $\sum A_i^* A_i$ is from being scalar. 

\Cref{th:red2qnilp} is yet another variation of the initial motivating result, retaining the normality context (as opposed to the {\it near}-normality of \Cref{thn:lesw}) but allowing for compact operators on infinite-dimensional Hilbert spaces:

\begin{theoremN}\label{thn:cpct}
  Let $A_i$ be commuting compact operators on a Hilbert space $H$ with $\sum A_i^* A_i$ strongly convergent to a scalar.
  
  There is then an orthogonal decomposition $H=H_{qn}\oplus H_{n}$, invariant under all $A_i$, such that the restrictions $A_i|_{H_n}$ are normal and $A_i|_{H_{qn}}$ are quasi-nilpotent.  \qedhere
\end{theoremN}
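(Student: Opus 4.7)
The plan is to identify $H_n$ with the closed orthogonal sum of the joint eigenspaces of $(A_i)_{i\in I}$ and to set $H_{qn}:=H_n^\perp$. Write $\sum A_i^*A_i = cI$ with $c\ge 0$; the case $c=0$ forces every $A_i=0$ and the conclusion is trivial, so I assume $c>0$.

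First I would establish that the joint eigenspaces $V_\lambda := \{\xi : A_i\xi = \lambda_i\xi\ \forall i\}$ (indexed by tuples $\lambda = (\lambda_i)_i\in\bC^I$) are finite-dimensional and pairwise orthogonal for distinct joint eigenvalues. Evaluating $\sum A_i^*A_i = cI$ on a unit $\xi\in V_\lambda$ shows $\|\lambda\|_{\ell^2(I)}^2 = c > 0$, forcing some $\lambda_i\ne 0$, so $V_\lambda\subseteq\ker(A_i-\lambda_i)$ is finite-dimensional by compactness. For $\xi\in V_\lambda$, $\eta\in V_\mu$ with $\lambda\neq\mu$,
\[
  c\langle\xi,\eta\rangle \;=\; \sum_i\langle A_i\xi, A_i\eta\rangle \;=\; \langle\lambda,\mu\rangle_{\ell^2(I)}\,\langle\xi,\eta\rangle,
\]
and since $\|\lambda\| = \|\mu\| = \sqrt c$ the Cauchy--Schwarz equality case rules out $\langle\lambda,\mu\rangle = c$, giving $\langle\xi,\eta\rangle = 0$. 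Thus $H_n := \overline{\bigoplus_\lambda V_\lambda}$ is a genuine closed orthogonal direct sum, $A_i$-invariant, and $A_i|_{H_n}$ acts as the scalar $\lambda_i$ on each summand $V_\lambda$ and is therefore normal.

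To get the quasi-nilpotence of $A_i|_{H_{qn}}$ it suffices (by compactness) to show that $A_i|_{H_{qn}}$ has no non-zero eigenvalue. If $A_i\xi = \mu\xi$ with $\mu\ne 0$ and $\xi\in H_{qn}$, then $K:=\ker(A_i-\mu)$ is finite-dimensional and $(A_j)_j$-invariant; the compression satisfies $\sum(A_j|_K)^*(A_j|_K) = cI|_K$, so the finite-dimensional Fraas theorem (i.e.\ the special case of \Cref{thn:lesw}) makes the $A_j|_K$ commuting normal matrices, hence simultaneously diagonalizable. Then $K$ decomposes into joint eigenspaces $V_\nu\subseteq H_n$, contradicting $0\ne\xi\in K\cap H_{qn}$.

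The main obstacle is proving that $H_{qn}$ is $A_i$-invariant, equivalently that $H_n$ is $A_i^*$-invariant. The same compression argument applied to each $V_\lambda$ shows $P_{V_\lambda}A_i^*\xi = \overline{\lambda_i}\xi$ for $\xi\in V_\lambda$, and orthogonality of the $V_\mu$'s gives $A_i^*\xi = \overline{\lambda_i}\xi + \eta_i$ with $\eta_i\in H_{qn}$; the task is to show each $\eta_i = 0$. Applying $\sum A_k^*A_k = cI$ to $\xi$ yields the linear constraint $\sum_i\lambda_i\eta_i = 0$, while applying it to $\eta:=\eta_j$ and invoking the operator identity $\sum_k A_k^*[A_k,A_j^*] = 0$ (itself a consequence of $\sum A_k^*A_k$ being scalar) produces $\sum_i\lambda_i A_i^*\eta = c\eta$. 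Cauchy--Schwarz then gives $\sum_i\|A_i^*\eta\|^2\ge c\|\eta\|^2$, and the hope is that equality must hold --- for in that case the CS equality case forces $A_i^*\eta = \overline{\lambda_i}\eta$, a symmetric computation yields $A_i\eta = \lambda_i\eta$, and $\eta\in V_\lambda\cap H_{qn} = \{0\}$. The delicate point is ruling out the strict-inequality case in this Cauchy--Schwarz bound, which is where I expect compactness of the $A_i$ to be essential --- presumably via an iteration that bootstraps the relations on the defect vectors $\eta_j$ up to a joint eigenvector in $H_{qn}$, which the previous step has already excluded.
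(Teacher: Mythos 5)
Your construction of $H_n$ as the closed orthogonal sum of the joint eigenspaces $V_\lambda$ is sound as far as it goes: the orthogonality of distinct $V_\lambda$'s via the Cauchy--Schwarz equality case, the normality of $A_i|_{H_n}$, and the reduction of quasi-nilpotence on $H_{qn}$ to the absence of non-zero eigenvalues (via compressing to the finite-dimensional, commuting-invariant space $\ker(A_i-\mu)$ and invoking the scalar case of \Cref{thn:lesw}) are all correct. But the proof has a genuine gap exactly where you flag one: the $A_i$-invariance of $H_{qn}$, equivalently the $A_i^*$-invariance of $H_n$, is never established, and this is the entire content of the theorem --- without it $A_i|_{H_{qn}}$ is not even an operator on $H_{qn}$, and ``$A_i|_{H_n}$ is normal'' only describes the compression rather than a direct summand. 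Your proposed route does not close it: you correctly derive $\sum_i \lambda_i A_i^*\eta = c\eta$ and hence $\sum_i\|A_i^*\eta\|^2\ge c\|\eta\|^2$, but nothing in the hypotheses bounds $\sum_i\|A_i^*\eta\|^2$ from above by $c\|\eta\|^2$ (only $\sum_i A_i^*A_i$ is scalar, not $\sum_i A_iA_i^*$ --- the unilateral shift shows the two can be genuinely different), so the Cauchy--Schwarz equality case cannot be forced, and the ``bootstrapping iteration'' is not supplied. As stated, the argument proves nothing about $\eta_j$.

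For comparison, the paper (\Cref{th:red2qnilp}) closes precisely this point by a different mechanism, one operator at a time: for $A=A_{i_0}$ and $0\ne\lambda\in\sigma(A)$ it takes the Riesz decomposition $H=K_\infty(\lambda;A)\oplus R_\infty(\lambda;A)$, writes $A_i$ in block form with respect to the \emph{orthogonal} decomposition $H=R_\infty(\lambda;A)\oplus R_\infty(\lambda;A)^\perp$, and reruns the column-counting argument from the proof of \Cref{th:onmatseq-rob} on the finite-dimensional corner: for a unit vector $f_j$ in a simultaneous triangularization of that corner, $\sum_i\|A_if_j\|^2=c$ while the diagonal (joint-eigenvalue) entries already contribute $\sum_i|\lambda_{i,j}|^2=c$, so all strictly-upper-triangular and off-corner entries vanish by positivity --- no Cauchy--Schwarz equality case is needed because the diagonal exhausts the budget exactly. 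That vanishing is what makes the generalized eigenspaces reducing (hence actual eigenspaces, mutually orthogonal, and orthogonal to $\bigcap_{\mu}R_\infty(\mu;A)$). If you want to salvage your joint-eigenspace framework, you should replace the adjoint--Cauchy--Schwarz step with this kind of exact norm bookkeeping applied to $A_i\zeta$ for $\zeta$ ranging over an adapted basis, rather than to $A_i^*\xi$.
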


\subsection*{Acknowledgements}

This work is partially supported by NSF grant DMS-2001128.

I am grateful for stimulating suggestions and comments from M. Fraas, B. Passer and L. Paunescu. 


\section{Commuting operators square-summable to scalars}\label{se:ssumm}

We denote the {\it generalized $\lambda$-eigenspace} \cite[p.6-1]{hogben_hndbk-lalg_2e_2014} of an operator $A$ on a Hilbert space by
\begin{equation}\label{eq:geneigen}
  K_{\infty}(\lambda;A):=\bigcup_{n}K_{n}(\lambda;A)
  ,\quad
  K_{n}(\lambda;A) := \ker(\lambda-A)^n,
\end{equation}
with the subscript $1$ on the symbol $K_1(\lambda;A)$ for the plain eigenspace occasionally omitted. 



Recall \cite[Theorem 1]{fraas2023commuting_xv1}, stating that commuting matrices $A_i\in M_n$ such that 
\begin{equation}\label{eq:aiai1}
  \sum_i A_i^* A_i = 1
\end{equation}
are automatically normal. That result turns out to be robust under deformation in the appropriate sense: roughly speaking, commuting families $(A_i)_i$ of matrices that {\it almost} satisfy \Cref{eq:aiai1} are close to commuting normal families. To make sense of this, recall the {\it $L^p$-norm} $\|\cdot\|_p$ (\cite[\S I.8.7.3]{blk}, \cite[Definition XI.9.1]{ds_linop-2_1963}) defined on the ideal
\begin{equation*}
  \left\{\text{compact operators}\right\}=:
  \cK(H)
  \trianglelefteq
  \cL(H)
  :=
  \left\{\text{bounded operators on a Hilbert space }H\right\}
\end{equation*}

 of compact operators (if allowed to take infinite values):
\begin{equation*}
  \|T\|_p =
  \begin{cases}
    \left(\sum_{n\in \bZ_{\ge 0}}\mu_n(T)^p\right)^{1/p}& \text{for }1\le p<\infty\\
    \mu_0(T)=\text{usual operator norm }\|T\|&\text{for }p=\infty
  \end{cases}
\end{equation*}
where
\begin{equation*}
  \left(\mu_0(T)\ge \mu_1(T)\ge \cdots\right)
  :=
  \text{eigenvalues of }|T|:=(T^*T)^{1/2}
\end{equation*}
rearranged non-increasingly (the {\it characteristic numbers} \cite[\S XI.9]{ds_linop-2_1963} or {\it $s$-numbers} \cite[\S II.2]{gk_lin} of $T$). $\|\cdot\|_2$ is the familiar {\it Hilbert-Schmidt norm} \cite[Definition XI.6.1]{ds_linop-2_1963}. For an operator $A\in \cL(H)$ on a Hilbert space $H$ write
\begin{equation*}
  sw(A) := \diam W(A) = \sup_{z,z'\in W(A)}|z-z'|
  \quad
  \left(\text{{\it numerical spread} of $A$}\right)
\end{equation*}
($\braket{-\mid -}$ denoting the inner product in the ambient Hilbert space, linear in the second variable), where $W(A)$ is the numerical range \Cref{eq:numrng}. Note that $sw(A)$ vanishes precisely for scalar operators, so in general it is a measure of the discrepancy from being scalar.

\begin{remark}\label{re:spread}
  The numerical spread $sw(\cdot)$ is what most naturally fits the statement and proof of \Cref{th:onmatseq-rob}, but note that for {\it normal} operators it is nothing but the diameter of the (convex hull of the) spectrum \cite[Problem 216]{hal_hspb_2e_1982}.

  The term {\it spread} was in fact introduced for that quantity (diameter of the spectrum) in \cite[\S 1]{zbMATH03121640} in the context of matrices. As for links between the two notions of spread (numerical and plain, again for matrices), see e.g. \cite[\S 2]{zbMATH07333199}. 
\end{remark}

With this background, the statement alluded to above is 


\begin{theorem}\label{th:onmatseq-rob}
  For any commuting family $(A_i)\subset M_n$ with convergent $\sum_i A_i^* A_i$ there is a commuting family $(B_i)\subset M_n$ of normal matrices such that 
  \begin{equation}\label{eq:th:onmatseq-rob:normnorm}
    \sum_i \|A_i-B_i\|_2^2
    \le
    n \cdot sw\left(\sum_i A_i^* A_i\right).
  \end{equation}
\end{theorem}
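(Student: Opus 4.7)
The plan is to take $B_i$ to be the ``diagonal parts'' of a joint Schur triangularization of the $A_i$. Since the commuting family $(A_i)_i$ generates a finite-dimensional commutative subalgebra of $M_n$, iterated common-eigenvector extraction produces a unitary $U\in M_n$ and an orthonormal basis $e_1,\dots,e_n$ of $\bC^n$ in which $U^*A_iU = D_i + N_i$ with each $D_i$ diagonal and each $N_i$ strictly upper-triangular. Set $B_i := UD_iU^*$; these are simultaneously diagonalized in the $U$-basis, hence commuting and normal. Unitary invariance of $\|\cdot\|_2$ gives $\|A_i-B_i\|_2 = \|N_i\|_2$, so
\[
  \sum_i\|A_i-B_i\|_2^2 \;=\; \sum_i\|N_i\|_2^2.
\]

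I would next rewrite this Hilbert--Schmidt sum through the diagonal of $S := \sum_i A_i^*A_i$. Working in the Schur basis and expanding $A_i^*A_i=(D_i+N_i)^*(D_i+N_i)$, the cross terms $D_i^*N_i$ and $N_i^*D_i$ are strictly triangular and thus contribute nothing on the diagonal. Letting $s_k := \braket{e_k\mid Se_k}$ denote the diagonal entries of $S$ and writing $T_k := \sum_i|d_i(k)|^2$ for the $(k,k)$-contribution of the diagonal parts, one finds $s_k = T_k + \sum_i\sum_{j<k}|a_i(j,k)|^2$, and summing over $k$ converts the off-diagonal mass of the $A_i$ into the diagonal discrepancy of $S$:
\[
  \sum_i\|N_i\|_2^2 \;=\; \sum_{k=1}^n (s_k - T_k).
\]

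The last step is to bound this by $n\cdot sw(S)$. Since $S$ is Hermitian, $W(S)=[\lambda_{\min}(S),\lambda_{\max}(S)]$ (cf.\ \Cref{re:spread}) and $sw(S)=\lambda_{\max}(S)-\lambda_{\min}(S)$. Each $s_k\in W(S)$ gives $\sum_k s_k \le n\lambda_{\max}(S)$. For the matching lower bound $\sum_k T_k \ge n\lambda_{\min}(S)$ I would argue that each tuple $(d_i(k))_i$ is a joint generalized eigenvalue of $(A_i)_i$: on the associated joint generalized eigenspace the operators $A_i-d_i(k)I$ are commuting nilpotents, hence share a nonzero common kernel vector $v$; taking $v$ of unit norm,
\[
  T_k \;=\; \sum_i|d_i(k)|^2 \;=\; \sum_i \|A_iv\|^2 \;=\; \braket{v\mid Sv} \;\ge\; \lambda_{\min}(S).
\]
Combining the two estimates yields $\sum_k(s_k-T_k)\le n\cdot sw(S)$, which is \Cref{eq:th:onmatseq-rob:normnorm}.

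The step I expect to require the most care is the joint-eigenvalue claim used to lower-bound $T_k$: one must check that the Schur diagonal tuples $(d_i(k))_i$ are genuine joint generalized eigenvalues of the commuting family (with the correct multiplicities), and that each of them is actually realized by an \emph{honest} common eigenvector rather than only a generalized one. Granting this, the remaining ingredients---simultaneous Schur triangularization, unitary invariance of $\|\cdot\|_2$, and the Hermitian-spectrum description of $W(S)$---are standard. The possibly infinite index set $I$ enters only through the finite-dimensional commutative algebra generated by the $A_i$, and the convergence of $\sum_i A_i^*A_i$ makes each $T_k$ finite and $\sum_i\|N_i\|_2^2$ absolutely summable.
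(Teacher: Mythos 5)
Your proposal follows essentially the same route as the paper: simultaneously triangularize the family, take the $B_i$ to be the diagonal parts, identify $\sum_i\|A_i-B_i\|_2^2$ with $\sum_k(s_k-T_k)$, and bound that against the numerical range of $S=\sum_i A_i^*A_i$. (The paper bounds each difference $s_k-\braket{v\mid Sv}$ by $sw(S)$ term by term rather than splitting into $\sum_k s_k\le n\lambda_{\max}(S)$ and $\sum_k T_k\ge n\lambda_{\min}(S)$, but for Hermitian $S$ these are the same estimate.) All of the routine steps --- unitary invariance, the diagonal computation of $A_i^*A_i$, the reduction from a joint generalized eigenvector to an honest one via commuting nilpotents --- are correct as written.

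The step you defer is exactly the crux, and as stated it still has a hole: you speak of ``the associated joint generalized eigenspace'' of the tuple $(d_i(k))_i$, but the assertion that this space is \emph{nonzero} is precisely what must be proved. Iterated common-eigenvector extraction only realizes $(d_i(k))_i$ as a joint eigenvalue of the induced action on the quotient $V/\langle e_1,\dots,e_{k-1}\rangle$; for a single operator one passes back to $V$ via the characteristic polynomial, but for a commuting family the analogous passage is not automatic. The paper closes this gap with a Jordan--H\"older argument for the commutative algebra $\cA$ generated by the $A_i$: the character $\chi_k\colon A_i\mapsto d_i(k)$ occurs as a composition factor of $V$ (witnessed by the triangularization); it cannot occur as a composition factor of $V/K_{\infty}(d_{i_0}(k);A_{i_0})$ because $A_{i_0}-d_{i_0}(k)$ acts invertibly there, so it must occur inside $K_{\infty}(d_{i_0}(k);A_{i_0})$ for every $i_0$; iterating (the descending chain of intersections stabilizes by finite-dimensionality) yields $\bigcap_i K_{\infty}(d_i(k);A_i)\ne\{0\}$, after which your commuting-nilpotents step produces the honest common unit eigenvector $v$ with $T_k=\braket{v\mid Sv}$. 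With that lemma supplied, your argument is complete.
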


Before going into the proof, note the following immediate consequence; it in turn recovers \cite[Theorem 1]{fraas2023commuting_xv1} by restricting attention to {\it finite} families. 

\begin{corollary}\label{cor:onmatseq}
  Commuting matrices $\{A_i\}_i\subset M_n$ with $\sum_i A^*_i A_i$ scalar are all normal and hence generate a commutative $C^*$-algebra.
\end{corollary}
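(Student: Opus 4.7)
The corollary is an immediate specialization of \Cref{th:onmatseq-rob}. The plan is to observe that if $\sum_i A_i^* A_i$ is scalar then its numerical range $W(\sum_i A_i^* A_i)$ reduces to a single point, so its diameter vanishes: $sw(\sum_i A_i^* A_i)=0$. Inequality \Cref{eq:th:onmatseq-rob:normnorm} then forces
\begin{equation*}
  \sum_i \|A_i-B_i\|_2^2 \le 0
\end{equation*}
for the normal commuting family $(B_i)$ supplied by \Cref{th:onmatseq-rob}, so $A_i=B_i$ for every $i$ and each $A_i$ is normal.

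For the $C^*$-algebra claim, having established that the $A_i$ are pairwise commuting and normal, I would invoke Fuglede's theorem (or its two-operator extension, Fuglede--Putnam): if $N$ is normal and $N T=TN$ then $N^* T = TN^*$. Applied with $N:=A_i$ and $T:=A_j$ this gives $A_i^* A_j = A_j A_i^*$ for all $i,j$, so the $*$-algebra generated by the $A_i$ inside $M_n$ is commutative. Being a commutative $*$-subalgebra of $M_n$, it is a commutative $C^*$-algebra.

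The only conceivable subtlety is the passage from finite to arbitrary families in the $C^*$-algebra assertion, but that is cosmetic: commutativity of a $*$-algebra is tested on generators, and Fuglede--Putnam is applied pairwise. There is no genuine obstacle; all the work has been done in \Cref{th:onmatseq-rob}.
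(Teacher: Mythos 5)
Your proof is correct and follows the paper's own argument exactly: the scalar hypothesis kills the numerical spread on the right-hand side of \Cref{eq:th:onmatseq-rob:normnorm}, forcing $A_i=B_i$ normal, and the commutativity of the generated $C^*$-algebra is then obtained via the Fuglede--Putnam theorem applied pairwise, just as in the paper. Nothing to add.
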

\begin{proof}
  The first statement is an immediate consequence of \Cref{th:onmatseq-rob} (since the right-hand side of \Cref{eq:th:onmatseq-rob:normnorm} is now assumed to vanish). The second claim then follows from the {\it Putnam-Fuglede theorem} \cite[Problem 192]{hal_hspb_2e_1982}: commutation with a normal operator entails commutation with its adjoint.
\end{proof}

\begin{remark}\label{re:hyers-ulam}
  \Cref{th:onmatseq-rob} is an instance of {\it Hyers-Ulam(-Rassias) stability}: almost-linear operators between Banach spaces are close to linear operators \cite[Theorems 1.1 and 1.2]{jung_hur-stab}, surjective almost-isometries on Hilbert spaces are close to surjective isometries \cite[Theorem 13.4]{jung_hur-stab}, almost-homogeneous functions between Banach spaces are close to homogeneous functions \cite[Theorem 5.11]{jung_hur-stab}, etc.
\end{remark}

\pf{th:onmatseq-rob}
\begin{th:onmatseq-rob}
  Being commuting, the $A_i$ are simultaneously upper-triangular \cite[Theorem 2.3.3]{hj_mtrx} with respect to some orthonormal basis $(e_j)_1^n$. The sought-after $B_i$ will be the respective diagonals of the $A_i$:
  \begin{equation*}
    B_i:=
    \diag\left(\lambda_{i,j}:=\braket{e_j\mid A_i e_j},\ 1\le j\le n\right)
    ,\quad \forall i.
  \end{equation*}
  To verify \Cref{eq:th:onmatseq-rob:normnorm} set $A:=\sum_i A_i^* A_i$ and note first that 
  \begin{equation}\label{eq:aiej}
    \sum_i \|A_i e_j\|^2
    =    
    \sum_i \braket{e_j\mid A_i^* A_i e_j}
    =
    \braket{e_j\mid Te_j}
    ,\quad \forall 1\le j\le n.
  \end{equation}
  We claim next that for every $1\le j\le n$ we have
  \begin{equation}\label{eq:nontrivint}
    \bigcap_i K(\lambda_{i,j};A_i)\ne \{0\}.
  \end{equation}
  Momentarily taking this for granted, for each fixed $j$ we can simultaneously upper-triangularize the $A_i$ with respect to a new orthonormal basis with a vector $e'_j$ in \Cref{eq:nontrivint} listed first, so that 
  \begin{equation}\label{eq:biej}
    \sum_i \|B_i e_j\|^2
    =
    \sum_i \|A_i e'_j\|^2
    =
    \braket{e'_j\mid Te'_j}
    ,\quad \forall 1\le j\le n.
  \end{equation}
  \Cref{eq:aiej,eq:biej} are at most $sw(T)$ apart by the latter's definition, hence the conclusion upon summing over $1\le j\le n$. 
  It remains to settle \Cref{eq:nontrivint}. Since $A_i$ commute and thus preserve each other's eigenspaces, that assertion is equivalent to the non-trivial intersection of the {\it generalized} eigenspaces $K_{\infty}(\lambda_{i,j};A_i)$. Were that intersection trivial, the 1-dimensional module $A_i\mapsto \lambda_{i,j}$ of the commutative algebra $\cA$ generated by the $A_i$ would not appear as a subquotient in a {\it Jordan-H\"older filtration} \cite[Proposition III.3.7]{stenstr_quot} of either of the two $\cA$-modules
  \begin{equation*}
    K_{\infty}(\lambda_{i_0,j};A_{i_0})
    \quad\text{and}\quad
    V/K_{\infty}(\lambda_{i_0,j};A_{i_0}),\quad V:=\text{ambient space }\bC^n
  \end{equation*}
  for a fixed index $i_0$, so would not appear in such a filtration at all. This is at odds with the original triangularization with respect to $(e_j)$, hence the contradiction.
\end{th:onmatseq-rob}

\begin{remarks}\label{res:cholesky}
  \begin{enumerate}[(1),wide=0pt]
  \item The commutativity of the family $\{A_i\}$ of \Cref{cor:onmatseq} cannot be relaxed to simultaneous unitary upper-triangularizability (as the proof, appealing crucially to that commutativity, suggests): {\it every} positive operator on $\bC^n$ is expressible as $T^*T$ for upper-triangular $T$ (the celebrated {\it Cholesky factorization} \cite[Corollary 7.2.9]{hj_mtrx}), so it is enough to decompose $1\in M_n$ as a sum of non-diagonal positive operators, say
    \begin{equation*}
      1
      =
      \left(
        \begin{array}{rr}
          \frac 12 & \frac 12\\
          \frac 12 & \frac 12
        \end{array}
      \right)
      +
      \left(
        \begin{array}{rr}
          \frac 12 & -\frac 12\\
          -\frac 12 & \frac 12
        \end{array}
      \right),
    \end{equation*}
    and express each summand as $A_i^*A_i$ for upper-triangular $A_i$.

  \item The dependence on $n$ in \Cref{eq:th:onmatseq-rob:normnorm} vanishes upon substituting the {\it normalized} Hilbert-Schmidt norm $\frac {1}{\sqrt{n}}\|\cdot\|_2$ on $M_n$ for $\|\cdot\|_2$, as is customary in the literature on almost-commutative matrices (\cite[\S 1]{1002.3082v1}, \cite[\S 2]{MR4134896}, etc.).
  \end{enumerate}  
\end{remarks}

\cite[\S 4]{fraas2023commuting_xv1} observes that the {\it unilateral shift} \cite[Problem 82]{hal_hspb_2e_1982}
\begin{equation*}
  e_n\xmapsto{\quad S\quad} e_{n+1},\quad n\in \bZ_{\ge 0}
\end{equation*}
on a Hilbert space with orthonormal basis $(e_n)_{n\in \bZ_{\ge 0}}$ is a non-self-adjoint singleton giving a counterexample to \Cref{cor:onmatseq} in infinite-dimensional spaces. \Cref{cor:onmatseq} does, however, suggest a more hopeful infinite-dimensional variant: the $A_i\in \cL(H)$ might be commuting {\it compact} \cite[Definition I.8.1.1]{blk} operators on a Hilbert space $H$, with the convergence of \Cref{cor:onmatseq} valid in the {\it strong} \cite[Definition I.3.1.1]{blk} topology on $\cL(H)$. In that context, one result that requires little more than has already been noted is

\begin{theorem}\label{th:red2qnilp}
  Let $A_i\in \cK(H)$ be commuting compact operators on a Hilbert space with
  \begin{equation}\label{eq:sconv}
    \sum_i A_i^* A_i = 1
    \quad
    \left(\text{strong convergence}\right).
  \end{equation}
  There is then an orthogonal decomposition $H=H_{qn}\oplus H_{n}$, invariant under all $A_i$, such that the restrictions $A_i|_{H_n}$ are normal and $A_i|_{H_{qn}}$ are quasi-nilpotent. 
\end{theorem}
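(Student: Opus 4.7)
My plan is to construct the decomposition by first taking $H_{qn}$ to be a maximal closed reducing subspace on which every $A_i$ is quasi-nilpotent, setting $H_n:=H_{qn}^\perp$, and then showing $H_n$ coincides with the closed orthogonal sum of joint eigenspaces at nonzero joint eigenvalues. The overall strategy is to reduce systematically to the matrix result \Cref{cor:onmatseq} via the projection trick implicit in the proof of \Cref{th:onmatseq-rob}.

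The first step is the finite-dimensional observation underlying \Cref{cor:onmatseq}: for any simultaneously $A_i$-invariant finite-dimensional $V\subseteq H$, the identification $(A_i|_V)^{*}=P_VA_i^{*}|_V$ (adjoint within $V$) together with $\sum_iA_i^{*}A_i=1$ gives $\sum_i(A_i|_V)^{*}(A_i|_V)v=P_V(\sum_iA_i^{*}A_iv)=P_Vv=v$, so $\sum_i(A_i|_V)^{*}(A_i|_V)=\mathbf{1}_V$ and by \Cref{cor:onmatseq} the restrictions $A_i|_V$ are simultaneously normal matrices on $V$. Applied to $V=V_\lambda:=\bigcap_iK_\infty(\lambda_i;A_i)$ for a nonzero joint generalized eigenvalue $\lambda$---finite-dimensional since $\ker(A_{i_0}-\lambda_{i_0})$ is finite-dimensional by compactness of $A_{i_0}$ whenever $\lambda_{i_0}\ne 0$---normality plus single generalized eigenvalue forces $A_i|_{V_\lambda}=\lambda_i\cdot\mathrm{id}_{V_\lambda}$, so $V_\lambda$ is a genuine joint eigenspace; applied to $V_\lambda+V_\mu$ for distinct nonzero $\lambda,\mu$, the same normality yields $V_\lambda\perp V_\mu$ (as eigenspaces of the normal $A_{i_0}|_{V_\lambda+V_\mu}$ for any $i_0$ with $\lambda_{i_0}\ne\mu_{i_0}$). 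Moreover $\bigcap_i\ker A_i=0$, from $\sum_i\|A_i\xi\|^2=\|\xi\|^2$.

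Next I would take $H_{qn}$ to be a maximal closed reducing subspace on which each $A_i$ is quasi-nilpotent---it exists because the class is closed under orthogonal direct sums (a compact block-diagonal operator with each block of spectrum $\{0\}$ has spectrum $\{0\}$) and Zorn applies---and set $H_n:=H_{qn}^\perp$, also reducing. For any $v\in V_\lambda$ with $\lambda\ne 0$, the orthogonal splitting $v=v_n+v_{qn}$ combined with reducing of each summand forces $A_iv_{qn}=\lambda_iv_{qn}$ for all $i$; picking $i_0$ with $\lambda_{i_0}\ne 0$ would give $v_{qn}$ as a nonzero eigenvector of the quasi-nilpotent $A_{i_0}|_{H_{qn}}$ unless $v_{qn}=0$. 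Therefore $V_\lambda\subseteq H_n$ for every $\lambda\ne 0$, and $M:=\overline{\bigoplus_{\lambda\ne 0}V_\lambda}\subseteq H_n$.

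The principal obstacle is proving $H_n=M$; once established, $A_i|_{H_n}$ is a direct sum of scalars (hence normal) and $A_i|_{H_{qn}}$ is quasi-nilpotent by construction. Setting $N:=H_n\ominus M$ and supposing $N\ne 0$, any nonzero closed $A_i$-reducing subspace $L\subseteq N$ would violate the maximality of $H_{qn}$: $L$ contains no joint eigenvector at a nonzero joint eigenvalue (such a vector lies in some $V_\mu\subset M\perp L$), so the commutativity/finite-dimensionality argument from the first paragraph---a nonzero eigenvalue of $A_{i_0}|_L$ lives on a finite-dimensional $A_j|_L$-invariant eigenspace and thus contains a joint eigenvector at a nonzero joint eigenvalue---forces each $A_i|_L$ to be quasi-nilpotent, whence $H_{qn}\oplus L$ is a strictly larger reducing quasi-nilpotent subspace. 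The delicate task is to extract such an $L$ from the putatively nonzero $N$. Noting that the $A_i^*$'s commute (adjoints of a commuting family commute) and are compact, and that $N$ is $A_i^{*}|_{H_n}$-invariant (because $M$ is $A_i|_{H_n}$-invariant inside the reducing $H_n$), I would apply the Jordan--H\"older-style eigenspace-intersection step from the proof of \Cref{th:onmatseq-rob} to the commuting compact family $\{A_i^{*}|_N\}$ to extract a nonzero joint $A_i^*$-eigenvector $\xi\in N$; its $\{A_j,A_j^*\}_j$-cyclic hull $C\subseteq H$ is reducing, and the careful bookkeeping of $C=(C\cap H_n)\oplus(C\cap H_{qn})$ should furnish the required reducing $L\subseteq N$. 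I expect this last reducing-hull extraction to be the hardest part, as it is precisely where the non-$A_i^*$-invariance of $M$ asserts itself; a cleaner alternative may be available via a simultaneous Ringrose chain for the commuting compact $\{A_i\}$ on $H_n$, diagonalizing in the spirit of the proof of \Cref{th:onmatseq-rob} to identify $M=H_n$ directly.
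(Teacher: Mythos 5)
Your first paragraph is sound, and it is essentially the same mechanism the paper runs on: a finite-dimensional jointly invariant $V\subseteq H$ inherits $\sum_i(A_i|_V)^*(A_i|_V)=\mathbf 1_V$, so \Cref{cor:onmatseq} applies and the joint generalized eigenspaces at nonzero joint eigenvalues are genuine, mutually orthogonal, scalar eigenspaces. The genuine gap is everything after that. Having defined $H_{qn}$ by Zorn as a maximal quasi-nilpotent reducing subspace, you must show $H_{qn}^\perp$ equals the closed span $M$ of the joint eigenspaces, and the extraction of a nonzero reducing $L\subseteq N:=H_{qn}^\perp\ominus M$ --- which you yourself flag as the hard part --- does not close as sketched. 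Concretely: $N$ is not $A_i$-invariant (only $A_i^*|_{H_n}$-invariant), so the $\{A_j,A_j^*\}$-cyclic hull of a vector of $N$ leaves $N$, and nothing guarantees that hull meets $N$ in a reducing subspace. Worse, the ``Jordan--H\"older eigenspace-intersection step'' you invoke to produce a joint $A_i^*$-eigenvector in $N$ is a finite-dimensional argument: a compact quasi-nilpotent operator (the Volterra operator, say) can be injective with no eigenvectors whatsoever, so the commuting compact family $\{A_i^*|_N\}$ need not admit any joint eigenvector, and this route to $L$ can fail outright. A smaller unaddressed point: Zorn needs upper bounds for \emph{chains}, not orthogonal families, and quasi-nilpotency of the restriction to the closure of an increasing union of quasi-nilpotent reducing subspaces requires its own argument (e.g.\ via Riesz projections), not the block-diagonal one you give.

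The paper sidesteps the obstacle by never introducing a maximal quasi-nilpotent subspace. For each nonzero $\lambda\in\sigma(A_{i})$ it takes the Riesz decomposition $H=K_\infty(\lambda;A_{i})\oplus R_\infty(\lambda;A_{i})$ with $R_\infty(\lambda;A_{i})=\bigcap_n\im(\lambda-A_{i})^n$, uses the finite-dimensional scalar-sum argument to show the off-diagonal block vanishes (so the decomposition is orthogonal and $K_\infty=K_1$), and then sets $H_{qn}:=\bigcap_i\bigcap_{\mu\in\sigma(A_i)^{\times}}R_\infty(\mu;A_i)$. Quasi-nilpotency on $H_{qn}$ is then automatic, since a nonzero eigenvalue $\mu$ of $A_i|_{H_{qn}}$ would force a nonzero vector into $K(\mu;A_i)\cap R_\infty(\mu;A_i)=0$, and $H_{qn}^{\perp}$ is by construction the closed span of the eigenspaces, where your paragraph-one analysis already gives normality. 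I would reorganize your argument around the subspaces $R_\infty(\mu;A_i)$ rather than around a Zorn-maximal quasi-nilpotent part.
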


Recall (\cite[pre Theorem 1.5.2]{ringr_cpctnonsa}, \cite[\S V.3, post Theorem 3.5]{tl_fa_2e_1986}) that an operator on a Banach space is {\it quasi-nilpotent} if its spectrum is $\{0\}$ (equivalently: its {\it spectral radius} \cite[\S 1.5]{ringr_cpctnonsa} vanishes).

\begin{remark}\label{re:sameconv}
  Which of the six standard weaker-than-norm topologies \cite[\S I.3.1]{blk} on $\cL(H)$ (weak, $\sigma$-weak, strong, $\sigma$-strong, strong$^*$ and $\sigma$-strong$^*$) is explicitly mentioned in \Cref{eq:sconv} is a matter of taste: per \cite[Problem 120]{hal_hspb_2e_1982} (phrased in terms of plain sequences but applicable in the present generality), for bounded non-decreasing {\it nets} \cite[Definition 11.2]{wil_top} of positive operators those topologies induce the same notion of convergence.
\end{remark}

There is a theory of upper-triangularization for single compact operators: the term for what we would here call `upper-triangular' is {\it superdiagonal} in \cite[\S 4.3]{ringr_cpctnonsa}; other sources \cite[\S XI.10]{ds_linop-2_1963} work with {\it sub}diagonal operators instead. That material extends straightforwardly to commuting families of compact operators: the central result driving the theory, namely \cite[Theorem 4.2.1]{ringr_cpctnonsa} the fact that compact operators have non-trivial invariant subspaces, is now well-known (\cite[Theorem]{zbMATH03459430}, \cite{zbMATH03554933}) for the {\it commutant} of a non-zero compact operator. We take all of this for granted, along with the requisite background on compact-operator spectral theory.

Recall \cite[Theorem 1.8.1]{ringr_cpctnonsa}, in particular, that for compact $A\in \cL(H)$ the generalized eigenspaces \Cref{eq:geneigen} attached to $\lambda\in\bC^{\times}:=\bC\setminus\{0\}$ are finite-dimensional. In particular, the same goes for the (plain) eigenspaces $K(\lambda; A):=K_1(\lambda;A)$.

\pf{th:red2qnilp}
\begin{th:red2qnilp}
  We isolate a single operator $A:=A_{i_0}$ and fix a non-zero $\lambda\in\sigma(A)$. There is \cite[Theorem 1.8.1]{ringr_cpctnonsa} a direct-sum decomposition
  \begin{equation*}
    H=K_{\infty}(\lambda;A)\oplus R_{\infty}(\lambda;A)
  \end{equation*}
  (`$R$' for `range') where, by analogy to \Cref{eq:geneigen},
  \begin{equation*}
    R_{\infty}(\lambda;A)
    :=
    \bigcap_n R_{n}(\lambda;A)
    ,\quad
    R_{n}(\lambda;A) := \im(\lambda-A)^n = (\lambda-A)^n H.
  \end{equation*}
  In an appropriate orthonormal basis for $H$, compatible with the orthogonal decomposition $H=R_{\infty}(\lambda;A) \oplus R_{\infty}(\lambda;A)^{\perp}$, we have
  \begin{equation*}
    A=
    \begin{pmatrix}
      A' & \bullet\\
      0&T
    \end{pmatrix}
  \end{equation*}
  with $T$ finite (of width $\dim K_{\infty}(\lambda;A)<\infty$), upper triangular, with diagonal entries $\lambda$. The argument employed in the proof of \Cref{cor:onmatseq} will then show that the $\bullet$ block vanishes. This is sufficient to ensure that
  \begin{itemize}
  \item for non-zero $\lambda\in\sigma(A)$ the generalized eigenspaces are in fact eigenspaces:
    \begin{equation*}
      K_{\infty}(\lambda;A)
      =
      K(\lambda;A)
      ,\quad
      \forall \lambda\in
      \sigma(A)^{\times}
      :=
      \sigma(A)\setminus\{0\}; 
    \end{equation*}
  \item and those eigenspaces are mutually orthogonal for distinct $\lambda$:
    \begin{equation*}
      K(\lambda;A)\perp K(\lambda';A)
      ,\quad
      \forall \lambda\ne \lambda'\in \sigma(A)^{\times};
    \end{equation*}
  \item and finally, said eigenspaces are all orthogonal to the largest $A$-invariant subspace where $A$ is quasi-nilpotent:
    \begin{equation}\label{eq:rinfsigmax}
      K(\lambda;A)
      \perp
      R_{\infty}(\sigma(A)^{\times}; A)
      :=
      \bigcap_{\mu\in \sigma(A)^{\times}}
      R_{\infty}(\mu; A).
    \end{equation}
  \end{itemize}
  In short: $A$ is an orthogonal direct sum of a quasi-nilpotent compact operator and a normal compact operator, operating respectively on the space $R_{\infty}(\sigma(A)^{\times}; A)$ of \Cref{eq:rinfsigmax} and its orthogonal complement. Finally, setting
  \begin{equation*}
    H_{qn}:= \bigcap_i R_{\infty}(\sigma(A_i)^{\times}; A_i)
  \end{equation*}
  will do.
\end{th:red2qnilp}

\addcontentsline{toc}{section}{References}

\Addresses

\end{document}